\DeclareFontFamily{OT1}{pzc}{}
\DeclareFontShape{OT1}{pzc}{m}{it}%
             {<-> s * [0.900] pzcmi7t}{}
\DeclareMathAlphabet{\mathscr}{OT1}{pzc}%
                                 {m}{it}
\newcommand{\ce}{\mathcal{E}}
\newcommand{\pp}{\ensuremath{\mathbb{P}}}
\newcommand\be{\begin{equation}}
\newcommand\ee{\end{equation}}
\newcommand\bea{\begin{eqnarray}}
\newcommand\eea{\end{eqnarray}}
\newcommand\bi{\begin{itemize}}
\newcommand\ei{\end{itemize}}
\newcommand\ben{\begin{enumerate}}
\newcommand\een{\end{enumerate}}
\newcommand{\twocase}[5]{#1 \begin{cases} #2 & \text{{\rm #3}}\\ #4 &\text{{\rm #5}} \end{cases}   }
\newcommand{\ncr}[2]{{#1 \choose #2}}
\newtheorem{thm}{Theorem}[section]
\newtheorem{lem}[thm]{Lemma}
\newtheorem{defi}[thm]{Definition}
\numberwithin{subsubsection}{subsection}
\begin{document}

\title{Gaussian Behavior in Generalized Zeckendorf Decompositions}

\author{Steven J. Miller and Yinghui Wang}
\institute{Department of Mathematics and Statistics, Williams College, sjm1@williams.edu (Steven.Miller.MC.96@aya.yale.edu) and Department of Mathematics, MIT, yinghui@mit.edu}

%
%
\maketitle

\abstract{A beautiful theorem of Zeckendorf states that every integer can be written uniquely as a sum of non-consecutive Fibonacci numbers $\{F_n\}_{n=1}^{\infty}$; Lekkerkerker proved that the average number of summands for integers in $[F_n, F_{n+1})$ is $n/(\varphi^2 + 1)$, with $\varphi$ the golden mean. Interestingly, the higher moments seem to have been ignored. We discuss the proof that the distribution of the number of summands converges to a Gaussian as $n \to \infty$, and comment on generalizations to related decompositions. For example, every integer can be written uniquely as a sum of the $\pm F_n$'s, such that every two terms of the same (opposite) sign differ in index by at least 4 (3). The distribution of the numbers of positive and negative summands converges to a bivariate normal with computable, negative correlation, namely $-(21-2\varphi)/(29+2\varphi) \approx -0.551058$.\\ \ \\ Keywords: Fibonacci numbers, Zeckendorf's Theorem, Lekkerkerker's theorem, generating functions, partial fraction expansion, central limit type theorems, far-difference representations. \\ \ \\ MSC 2010: 11B39 (primary) 65Q30, 60B10 (secondary).}




\section{Introduction}

\subsection{History}\label{history}
$ $

The Fibonacci numbers have fascinated professional mathematicians and amateurs for centuries. They have a wealth of properties and interesting relationships; see for example \cite{Kos}. This article is concerned with how Fibonacci numbers arise in decompositions. A beautiful theorem of Zeckendorf \cite{Ze} states that every positive integer may be written uniquely as a sum of non-adjacent Fibonacci numbers; of course, to ensure that the decomposition is unique we need to use the normalization $F_1 = 1$, $F_2 = 2$, $F_3 = 3$, $F_4 = 5$ and in general $F_{n+1} = F_n + F_{n-1}$.

The standard proof is by induction, and is also constructive. Simply take the largest Fibonacci number that is at most our number $x$; say this is $F_n$. If $F_n+F_{n-1} \le x$ then we could take $F_{n-1}$ in our decomposition, and then replace $F_n$ and $F_{n-1}$ with $F_{n+1}$ by the recurrence relation, which contradicts the maximality of $F_n$. Thus $F_n + F_{n-1} > x$ and $F_{n-1}$ cannot be used. By induction, we can write $x-F_n$ as a sum of non-adjacent Fibonacci numbers; further, since $x-F_n < F_{n-1}$ clearly $F_{n-1}$ is not in our decomposition. Adding $F_n$ to the decomposition of $x-F_n$ yields our desired expansion. To prove uniqueness, we again proceed by induction. Arranging the Fibonacci summands in decreasing order, if $x = F_{n_1} + \cdots + F_{n_k} = F_{m_1} + \cdots + F_{m_\ell}$, we must have $F_{n_1} = F_{m_1}$; if not, the two sums cannot be equal.\footnote{If $F_{m_1} > F_{n_1}$, then the largest the $n$-sum can be is $F_{n_1} + F_{n_1-2} + F_{n_1-4} + \cdots + \delta$, where $\delta$ is either 1 or 2. Adding 1 or 2 to this and using the recurrence relation gives $F_{n_1+1}$, and thus the expressions are unequal.} The claim now follows by induction as $x-F_{n_1} < x$.

There are many questions one may ask about the Zeckendorf decomposition. The first is to understand the average number of summands needed. Lekkerkerker \cite{Lek} proved that for $x \in [F_n, F_{n+1})$, as $n\to\infty$ the average number of summands needed is $n/(\varphi^2 + 1)$, with $\varphi = \frac{1+\sqrt{5}}2$ the golden mean. The proof involves continued fractions, and is unfortunately limited to the mean (though it can be generalized to other related decompositions; see \cite{BCCSW,Day,Ha,Ho,Ke,Len} for some of the history and results along these lines).

In this chapter we discuss the combinatorial vantage of Kolo$\breve{{\rm g}}$lu, Kopp, Miller and Wang that allows us to determine, not just the mean, but all the moments and thus the limiting distribution. They prove the fluctuations of the number of summands needed for $x \in [F_n,F_{n+1})$  about the mean converges to a Gaussian as $n\to\infty$. The proof follows from writing down an explicit formula for the number of $x$ where there are exactly $k$ summands in the Zeckendorf decomposition, and then using Stirling's formula to analyze the resulting binomial coefficients. These results were generalized in Miller-Wang \cite{MW} to other decompositions in terms of elements satisfying special recurrence properties. While the combinatorial approach is still the foundation of the analysis, the resulting expressions are too involved and cannot be attacked as directly as the Stirling approach in the initial case. Instead, the proofs are completed by an analysis of the resulting generating functions, and then differentiating identities to determine the moments.

We describe the main results in \S\ref{mainresults} and then sketch the proofs in the Fibonacci case in \S\ref{sec:fibonaccicase}. The key idea is recasting the problem from number theory to combinatorics, specifically to the number of integer partitions satisfying certain constraints. We describe this idea in detail, and outline the resulting algebra (complete details are available in \cite{KKMW}). Unfortunately, while the combinatorial framework applies to generalizations of Zeckendorf decompositions, the proof in general is significantly harder. This is due to the fact that for the Fibonacci numbers, we have an explicit formulas for the probability a number in $[F_n, F_{n+1})$ has exactly $k+1$ summands in its Zeckendorf decomposition; it is just $\ncr{n-1-k}{k}/F_{n-1}$. All our results follow from a careful analysis of the behavior as $n\to\infty$, which can be accomplished with Stirling's formula. In the general case, the resulting expressions are not as tractable, and the combinatorial approach must be supplemented. We sketch the main ingredients in the analysis in \S\ref{sec:plrs}, and refer the reader to \cite{MW} for full proofs. We end in \S\ref{sec:conclusion} with some open questions.

\subsection{Main Results}\label{mainresults}
$ $

Before stating the main results, we first set some notation. The sequences defined below are the generalizations of the Fibonacci numbers that can be handled using the techniques of \cite{MW}.

\begin{defi}\label{defn:goodrecurrencereldef}\label{def:goodrecurrence} A sequence $\{H_n\}_{n=1}^\infty$ of positive integers is a \textbf{Positive Linear Recurrence Sequence (PLRS)} if the following properties hold:

\ben
\item \emph{Recurrence relation:} There are non-negative integers $L, c_1, \dots, c_L$\label{c_i} such that \be H_{n+1} \ = \ c_1 H_n + \cdots + c_L H_{n+1-L},\ee with $L, c_1$ and $c_L$ positive.
\item \emph{Initial conditions:} $H_1 = 1$, and for $1 \le n < L$ we have
\be H_{n+1} \ =\
c_1 H_n + c_2 H_{n-1} + \cdots + c_n H_{1}+1.\ee
\een

A decomposition $\sum_{i=1}^{m} {a_i H_{m+1-i}}$\label{a_i} of a positive integer $N$ (and the sequence $\{a_i\}_{i=1}^{m}$) is \textbf{legal}\label{legal} if $a_1>0$, the other $a_i \ge 0$, and one of the following two conditions holds:

\begin{itemize}

\item Condition 1: 
We have $m<L$ and $a_i=c_i$ for $1\le i\le m$.

\item Condition 2: There exists $s\in\{0,\dots, L\}$ such that
\begin{equation}\label{eq:legalcondition2}
a_1\ = \ c_1,\ a_2\ = \ c_2,\ \cdots,\ a_{s-1}\ = \ c_{s-1}\ {\rm{and}}\ a_s<c_s,
\end{equation}
$a_{s+1}, \dots, a_{s+\ell} \ = \  0$ for some $\ell \ge 0$,
and $\{b_i\}_{i=1}^{m-s-\ell}$ (with $b_i = a_{s+\ell+i}$) is legal.

\end{itemize}

If $\sum_{i=1}^{m} {a_i H_{m+1-i}}$ is a legal decomposition of $N$, we define the \textbf{number of summands}\label{summands} (of this decomposition of $N$) to be $a_1 + \cdots + a_m$.
\end{defi}

Informally, a legal decomposition is one where we cannot use the recurrence relation to replace a linear combination of summands with another summand, and the coefficient of each summand is appropriately bounded. For example, if $H_{n+1} = 2 H_n + 3 H_{n-1} + H_{n-2}$, then $H_5 + 2 H_4 + 3 H_3 + H_1$ is legal, while $H_5 + 2 H_4 + 3 H_3 + H_2$ is not (we can replace $2 H_4 + 3 H_3 + H_2$ with $H_5$), nor is $7H_5 + 2H_2$ (as the coefficient of $H_5$ is too large). Note the Fibonacci numbers are just the special case of $L=2$ and $c_1 = c_2 = 1$.

The following probabilistic language is convenient for stating some of our main results.

\begin{defi}[Associated Probability Space to a Positive Linear Recurrence Sequence]\label{def:assocprobspace}
Let $\{H_n\}$ be a Positive Linear Recurrence Sequence. For each $n$, consider the discrete outcome space \be \Omega_n \ = \ \{H_n,\ H_n+1,\ H_n + 2,\  \cdots,\ H_{n+1}-1\} \ee with probability measure \be \pp_n(A) \ = \ \sum_{\omega \in A \atop \omega \in \Omega_n} \frac1{H_{n+1}-H_n}, \ \ \  a \subset \Omega_n; \ee in other words, each of the $H_{n+1}-H_n$ numbers is weighted equally. We define the random variable $K_n$\label{K_n} by setting $K_n(\omega)$ equal to the number of summands of $\omega \in \Omega_n$ in its legal decomposition. Implicit in this definition is that each integer has a unique legal decomposition; we prove this in Theorem \ref{thm:genZeckendorf}, and thus $K_n$ is well-defined.

We denote the cardinality of $\Omega_n$\label{index:Deltan}  by \be \Delta_n\ =\ H_{n+1}-H_n,\ee and we set $p_{n,k}$\label{pnk} equal to the number of elements in $[H_n, H_{n+1})$ whose generalized Zeckendorf decomposition has exactly $k$ summands; thus \be p_{n,k}\ =\ \Delta_n \cdot {\rm Prob}(K_n=k).\ee
\end{defi}

Before stating the main results, we first examine a special case which suggests why the limiting behavior is Gaussian. Consider the PLRS given by $L=1$, $H_{n+1} = 10 H_n$. Thus our PLRS is just the geometric series $1, 10, 100, \dots$, and a legal decomposition of $N$ is just its decimal expansion. Clearly every positive integer has a unique legal decomposition. Further, the distribution of the number of summands converges to a Gaussian by the Central Limit Theorem, as we essentially have the sum of $n-1$ independent, identically distributed discrete uniform random variables. To see this, write $N = a_1 10^n + \cdots + a_{n+1} 1$. We are interested in the large $n$ behavior of $a_1 + \cdots + a_{n+1}$ as we vary over $x$ in $[10^n, 10^{n+1})$. For large $n$ the contribution of $a_1$ is immaterial, and the remaining $a_i$'s can be understood by considering the sum of $n$ independent, identically distributed discrete uniform random variables on $\{0, \dots, 9\}$ (which have mean 4.5 and standard deviation approximately 2.87). Denoting these random variables by $A_i$\label{A_i}, by the Central Limit Theorem $A_2 + \cdots + A_{n+1}$ converges to being normally distributed with mean $4.5 n$ and standard deviation $\sqrt{33n/4}$.

The first result is that a PLRS leads to a unique, generalized Zeckendorf decomposition. Part (a) has also been recently studied by Hamlin \cite{Ha}.

\begin{theorem}[Generalized Zeckendorf's Theorem for PLRS \cite{MW}]\label{thm:genZeckendorf} Let $\{H_n\}_{n=1}^\infty$ be a \emph{Positive Linear Recurrence Sequence}. Then

{\rm{(a)}} There is a unique legal decomposition for each integer $N\ge 0$.

{\rm{(b)}} There is a bijection between the set $\mathcal{S}_n$\label{mathcalS_n} of integers in $[H_n, H_{n+1})$ and the set $\mathcal{D}_n$\label{mathcalD_n} of legal decompositions $\sum_{i=1}^{n} {a_i H_{n+1-i}}$.
\end{theorem}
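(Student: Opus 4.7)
The plan is to reduce both parts to a single \emph{range lemma}: every legal decomposition $\sum_{i=1}^{m} a_i H_{m+1-i}$ represents an integer in $[H_m, H_{m+1})$. The lower bound $\ge H_m$ is immediate from $a_1 \ge 1$. For the upper bound I would induct on $m$. Under Condition 1 the value is $\sum_{i=1}^{m} c_i H_{m+1-i} = H_{m+1}-1$ by the initial conditions. Under Condition 2 write the value as $\sum_{i<s} c_i H_{m+1-i} + a_s H_{m-s+1} + T$, where $T$ is the value of a legal subdecomposition with $m-s-\ell$ terms. By induction $T < H_{m-s-\ell+1} \le H_{m-s+1}$, and $a_s \le c_s-1$, so $a_s H_{m-s+1} + T < c_s H_{m-s+1}$; hence the total is strictly less than $\sum_{i=1}^L c_i H_{m+1-i} = H_{m+1}$.

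For existence in part (a), I would argue by strong induction on $N$. Given $H_n \le N < H_{n+1}$, I greedily set coefficients $a_j$ so long as the prefix still matches $(c_1,\dots,c_{j-1})$: if the running remainder $R_j$ satisfies $R_j \ge c_j H_{n-j+1}$ set $a_j = c_j$, otherwise set $a_j = \lfloor R_j/H_{n-j+1}\rfloor$. Since $N<H_{n+1} = \sum_{i=1}^L c_i H_{n+1-i}$, the matching must break within at most $L$ steps, yielding an index $s$ with $a_s < c_s$ and residual $R < H_{n-s+1}$. By the inductive hypothesis $R$ has a legal decomposition; inserting the appropriate number of zeros $\ell$ before it produces the desired legal decomposition of $N$ with top index $n$ via Condition 2.

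For uniqueness in part (a), also induct on $N$. By the range lemma any two legal decompositions of $N$ share the same top index $n$. The same analysis shows the initial coefficients $a_1,\dots,a_s$ are forced by $N$, because any legal tail at position $j$ contributes less than $H_{n-j+1}$ (again by the range lemma), so $a_j = c_j$ if and only if the running residual is at least $c_j H_{n-j+1}$. The residual after the strict drop has a unique legal decomposition by the inductive hypothesis, pinning down the remaining coefficients. Part (b) then follows at once: the range lemma sends each decomposition in $\mathcal{D}_n$ to an integer in $\mathcal{S}_n$, uniqueness from (a) makes this map injective, and existence from (a) together with the range lemma (which forces the top index of the decomposition of any $N \in \mathcal{S}_n$ to equal $n$) makes it surjective.

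The main obstacle is the range lemma under Condition 2, whose recursive structure --- first $s-1$ coefficients matching $c_1,\dots,c_{s-1}$, a single strict drop at $a_s$, $\ell$ intermediate zeros, and then a legal subtail of length $m-s-\ell$ --- requires carefully tracking index shifts so that the inductive hypothesis applies cleanly to the subtail at its own top index $m-s-\ell$, and then combining $a_s \le c_s - 1$ with the recurrence $H_{m+1}=\sum_{i=1}^L c_i H_{m+1-i}$ to turn the componentwise bounds into the sharp strict upper bound $H_{m+1}$.
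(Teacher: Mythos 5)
Your proof is correct, but it takes a genuinely different route from the one sketched in this paper. Here the general PLRS case is simply deferred to \cite{MW} as ``careful book-keeping,'' and the Fibonacci case is handled by a \emph{counting} argument: uniqueness is taken from the classical greedy proof, and existence follows from the identity $\sum_k \ncr{n-1-k}{k} = F_{n-1}$, which shows the number of legal decompositions with top term $F_n$ equals the number of integers in $[F_n, F_{n+1})$, so the (injective) evaluation map must be a bijection. You instead give a direct, self-contained structural proof for arbitrary PLRS: a range lemma pinning every legal decomposition of length $m$ into $[H_m, H_{m+1})$, a greedy construction for existence, and a forcing argument for uniqueness. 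Your approach buys uniformity (no case-by-case binomial identity, and it works verbatim for every PLRS rather than requiring an explicit count of $\mathcal{D}_n$), at the cost of the book-keeping you rightly identify as the main obstacle; the counting approach buys the exact distribution $p_{n,k}$ as a by-product, which is what the rest of the paper actually needs. Two small points to tighten: in the Condition 2 case of the range lemma your final step invokes $\sum_{i=1}^{L} c_i H_{m+1-i} = H_{m+1}$, which requires $m \ge L$; for $m < L$ you must instead bound by $\sum_{i=1}^{m} c_i H_{m+1-i} = H_{m+1}-1$ using the initial conditions (as you already do under Condition 1). And in the uniqueness argument you should note explicitly that the residual $T$ after the strict drop satisfies $T < N$ (which follows from $c_1 \ge 1$, or from $a_1 \ge 1$ when $s=1$), so that strong induction on $N$ legitimately applies to the tail.
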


In addition to being of interest in its own right, the Generalized Lekkerkerker Theorem below is needed as input to prove the Gausian behavior of the number of summands.

\begin{theorem}[Generalized Lekkerkerker's Theorem for PLRS \cite{MW}]\label{thm:genlekkerkerker} Let $\{H_n\}_{n=1}^\infty$ be a \emph{Positive Linear Recurrence Sequence}, let $K_n$ be the random variable of Definition \ref{def:assocprobspace} and denote its mean by $\mu_n$. Then there exist constants $C>0$, $d$ and $\gamma_1\in (0,1)$ depending only on $L$ and the $c_i$'s in the recurrence relation of the $H_n$'s such that
\begin{equation}\label{eq:genlekkerkerker}
\mu_n\ =\ Cn+d+o(\gamma_1^n).
\end{equation}
\end{theorem}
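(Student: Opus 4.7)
The plan is to compute the total summand count $S_n := \sum_{N \in [H_n, H_{n+1})} K_n(N)$, so that $\mu_n = S_n/\Delta_n$, and to derive a linear recurrence for $S_n$ whose homogeneous part has the same characteristic polynomial as the $H_n$-recurrence. The crucial feature will be that the inhomogeneous forcing in this recurrence grows at exactly the exponential rate $\alpha_1^n$ of $\Delta_n$, where $\alpha_1>1$ is the dominant root of the characteristic polynomial. Because $\alpha_1$ is itself a root, the forcing is resonant and produces a particular solution proportional to $n\alpha_1^n$. Dividing $S_n$ by $\Delta_n \sim A\alpha_1^n$ then yields the claimed form $\mu_n = Cn + d + o(\gamma_1^n)$.

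To build the recurrence, I use Theorem \ref{thm:genZeckendorf}(b) to identify $\Omega_n$ with the set $\mathcal{D}_n$ of legal decompositions and stratify by the initial block. By Condition 2 of Definition \ref{def:goodrecurrence}, essentially every legal decomposition of length $n$ begins with
\be
c_1 H_n + c_2 H_{n-1} + \cdots + c_{s-1} H_{n-s+2} + a_s H_{n-s+1}
\ee
for some $1 \le s \le L$ and $0 \le a_s < c_s$, followed by $\ell \ge 0$ zero coefficients and then a legal tail of length $n-s-\ell$; the Condition 1 decompositions contribute only a bounded boundary. Each stratum contributes $(c_1 + \cdots + c_{s-1} + a_s)$ summands per tail, plus the total summand count over the tails themselves, which is $S_{n-s-\ell}$. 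Summing over all $s, a_s, \ell$ yields a relation of the form
\be
S_n \ = \ \sum_{i=1}^{L} c_i\, S_{n-i} \ +\ R_n,
\ee
where $R_n$ is an explicit linear combination of $\Delta_{n-1}, \ldots, \Delta_{n-L}$ plus bounded boundary corrections, and (importantly) the homogeneous coefficients are precisely the $c_i$'s from the $H_n$-recurrence.

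A Perron-Frobenius argument on the companion matrix, using $c_1, c_L > 0$, shows that $\alpha_1$ is simple and strictly dominates all other roots in modulus; call the next-largest modulus $\rho < \alpha_1$. Then $\Delta_n = A\alpha_1^n + O(\rho^n)$, and correspondingly $R_n = A'\alpha_1^n + O(\rho^n)$ with $A' > 0$ (positivity holds because the initial-block contribution is at least $a_1 \ge 1$ in every stratum). By the method of undetermined coefficients, with $\alpha_1$ a root of the characteristic polynomial, the particular solution has the resonant shape $Bn\alpha_1^n$, giving $S_n = Bn\alpha_1^n + B'\alpha_1^n + O(\rho^n)$ with $B > 0$. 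Dividing,
\be
\mu_n \ = \ \frac{Bn\alpha_1^n + B'\alpha_1^n + O(\rho^n)}{A\alpha_1^n + O(\rho^n)} \ = \ Cn + d + O(\gamma_1^n),
\ee
where $C = B/A > 0$, $d = B'/A$, and $\gamma_1 := \rho/\alpha_1 \in (0,1)$.

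The main obstacle is the bookkeeping in step two: one must correctly absorb the Condition 1 boundary decompositions, verify that the homogeneous coefficients of the $S_n$-recurrence are exactly the $c_i$'s (so the characteristic polynomial matches that of $\{H_n\}$ and resonance actually occurs rather than being inflated to a spuriously larger polynomial), and confirm that the leading coefficient $B$ of the resonant particular solution is nonzero. A secondary, more standard task is the Perron-Frobenius/primitivity argument that makes $\alpha_1$ simple and strictly dominant, which ultimately rests on the hypothesis $c_1, c_L > 0$ in Definition \ref{def:goodrecurrence}.
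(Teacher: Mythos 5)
Your proposal is correct in outline but takes a genuinely different route from the paper. For general PLRS the paper works with the two-variable generating function $\sum_{n,k}p_{n,k}x^ky^n$, performs a partial fraction expansion over the roots $y_i(x)$ of $1-\sum_{m=0}^{L-1}\sum_{j=s_m}^{s_{m+1}-1}x^jy^{m+1}$, extracts $g_n(x)=\sum_k p_{n,k}x^k$, and reads the mean off the differentiating identity $g_n'(1)=g_n(1)E[K_n]$; the $Cn$ term arises from $\partial_x\, y_1(x)^{-n}$ and the exponentially small error from the gap between the dominant root and the others. You instead sum the combinatorial recurrence against $k$ to obtain an inhomogeneous linear recurrence for the total summand count $S_n$ with the same characteristic polynomial as $\{H_n\}$ and forcing built from the $\Delta_{n-i}$'s, and extract $Cn+d$ from the resonant particular solution. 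For the mean alone your route is more elementary --- it needs Perron--Frobenius only at $x=1$ rather than control of the roots $y_i(x)$ in a neighborhood of $x=1$ --- but it does not scale to the higher moments, which is why the paper builds the generating-function machinery it will reuse for the Gaussian result. Two points in your second step need more care than you give them. First, stratifying by the initial block yields a full-history relation (a sum over all $\ell\ge 0$), and one must subtract consecutive instances --- exactly as the paper does in passing from $p_{n+1,k+1}=p_{n-1,k}+p_{n-2,k}+\cdots$ to $p_{n+1,k+1}=p_{n,k+1}+p_{n-1,k}$ --- to reach the finite recurrence $p_{n+1,k}=\sum_{m=0}^{L-1}\sum_{j=s_m}^{s_{m+1}-1}p_{n-m,k-j}$, from which $R_{n+1}=\sum_{m}\bigl(\sum_{j=s_m}^{s_{m+1}-1}j\bigr)\Delta_{n-m}$. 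Second, your justification that $A'>0$ (``the initial-block contribution is at least $a_1\ge1$ in every stratum'') is not right as stated, since the $j=0$ term genuinely occurs in that recurrence; positivity instead follows because $\sum_{j=s_{L-1}}^{s_L-1}j\ \ge\ s_{L-1}\ \ge\ c_1\ \ge\ 1$ when $L\ge2$ (using $c_L>0$), the case $L=1$ being the base-$c_1$ digit expansion, which is immediate. With those repairs the resonance argument goes through and yields $C=B/A>0$ as required.
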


The main result in the subject is

\begin{theorem}[Gaussian Behavior for PLRS \cite{MW}]\label{thm:Gaussian} Let $\{H_n\}_{n=1}^\infty$ be a \emph{Positive Linear Recurrence Sequence} and let $K_n$ be the random variable of Definition \ref{def:assocprobspace}. As $n\rightarrow \infty$, the distribution of $K_n$ converges to a Gaussian.
\end{theorem}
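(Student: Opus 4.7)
The plan is to attack the distribution of $K_n$ through a bivariate generating function and an asymptotic/singularity analysis, which is the natural generalization of the Stirling-based attack that worked for Fibonacci. Concretely, I would first build the combinatorial generating function
\begin{equation}
F(x,y)\ =\ \sum_{n \ge 1}\sum_{k \ge 0} p_{n,k}\, x^n y^k,
\end{equation}
where $x$ tracks the index $n$ (equivalently, the ``length'' of the legal decomposition) and $y$ marks the number of summands. By Theorem \ref{thm:genZeckendorf}, $p_{n,k}$ is exactly the number of legal decompositions $\sum_{i=1}^{n} a_i H_{n+1-i}$ with $a_1+\cdots+a_n = k$.

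Next I would derive a functional equation for $F(x,y)$ by exploiting the recursive structure built into Condition~2 of Definition \ref{def:goodrecurrence}: a legal decomposition consists of an initial block $a_1 = c_1, \ldots, a_{s-1} = c_{s-1}, a_s < c_s$, then $\ell \ge 0$ zeros, followed by another legal decomposition on the remaining tail. Summed over all choices of $s \in \{1,\dots,L\}$ and $a_s \in \{0,\dots,c_s - 1\}$, this self-similar decomposition translates (after accounting for the $L-1$ boundary initial conditions) into a linear relation whose solution expresses $F(x,y)$ as a rational function $N(x,y)/D(x,y)$ with $D$ an explicit polynomial in $x$ and $y$ determined by $L$ and $c_1,\ldots,c_L$. (At $y=1$ this must specialize to the generating function for $\Delta_n = H_{n+1}-H_n$, so the root structure of $D(x,1)$ will agree with the characteristic polynomial of the PLRS.)

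I would then carry out a singularity analysis in $x$ with $y$ treated as a parameter in a small complex neighborhood of $1$. The key geometric input is that $D(x,1)$ has a unique smallest-modulus root $\alpha(1) = 1/H_{\infty\text{-growth rate}}$ which is simple; by the implicit function theorem this root perturbs to an analytic branch $\alpha(y)$ that remains isolated and simple in a neighborhood of $y=1$, and all other roots of $D(x,y)$ stay strictly farther from $0$. Standard partial fraction/transfer expansions then give
\begin{equation}
[x^n]F(x,y)\ =\ g(y)\,\alpha(y)^{-n}\bigl(1+O(\gamma^{n})\bigr)
\end{equation}
for some $\gamma<1$ and an analytic $g$ with $g(1)>0$. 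Dividing by the $y=1$ version yields the moment generating function
\begin{equation}
\mathbb{E}\bigl[e^{t K_n}\bigr]\ =\ \exp\!\bigl(n\,\Psi(t)+\Phi(t)+o(1)\bigr),\qquad \Psi(t)\ =\ \log\bigl(\alpha(1)/\alpha(e^t)\bigr),
\end{equation}
with $\Psi,\Phi$ analytic near $t=0$. This is precisely the quasi-power shape, so Hwang's quasi-power theorem (or an elementary Levy-continuity argument on $\mathbb{E}[e^{it(K_n-\mu_n)/\sigma_n}]$) gives convergence of $(K_n-\mu_n)/\sigma_n$ to the standard normal, with $\mu_n = \Psi'(0)n + O(1)$ and $\sigma_n^2 = \Psi''(0)n + O(1)$.

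The main obstacle, and where real care is needed, is the analytic step: verifying that the bivariate denominator $D(x,y)$ has a \emph{simple} dominant root for $y$ near $1$ and that $\Psi''(0)>0$ (the variance is of order $n$, not lower). Simplicity and isolation of $\alpha(y)$ require ruling out coincidences among roots of the PLRS characteristic polynomial, which ultimately reduces to checking that the ``shifted'' characteristic polynomial associated to $D(\cdot,y)$ has the Perron--Frobenius type separation property uniformly near $y=1$; this is where the positivity hypotheses ($c_1,c_L>0$) on the PLRS are essential. Non-degeneracy of $\Psi''(0)$ then follows from the strict convexity of $\log(1/\alpha(e^t))$, equivalently from Theorem \ref{thm:genlekkerkerker} together with a second derivative computation that rules out degenerate (deterministic) PLRS. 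Once these two facts are established, the Gaussian conclusion of Theorem \ref{thm:Gaussian} is immediate.
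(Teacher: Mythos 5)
Your proposal is sound and its combinatorial core coincides with the paper's: both build the bivariate rational generating function for $p_{n,k}$ (the paper writes $\sum_{n,k}p_{n,k}x^ky^n$ with the roles of the two marking variables swapped relative to yours, and derives the denominator from the recurrence $p_{n+1,k}=\sum_{m=0}^{L-1}\sum_{j=s_m}^{s_{m+1}-1}p_{n-m,k-j}$ rather than directly from Condition 2, though these are equivalent), both reduce everything to a partial-fraction expansion over the roots of the denominator, and both identify the same genuine difficulty, namely that the relevant root of $A(y)=1-\sum_{m=0}^{L-1}\sum_{j=s_m}^{s_{m+1}-1}x^jy^{m+1}$ must stay simple and strictly dominant as the marking variable varies near $1$; the paper handles this by showing $A(y)$ has multiple roots for only finitely many positive $x$. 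Where you genuinely diverge is the final probabilistic step. The paper stays on the real line near $x=1$, extracts all moments by repeatedly applying $x\frac{d}{dx}$ to $g(x)=\sum_{k>0}p_{n,k}x^k$, and invokes the Method of Moments by verifying that $E[(K_n')^{2m}]/({\rm SD}(K_n'))^{2m}\rightarrow(2m-1)!!$ while the normalized odd moments vanish. You instead extend the dominant-root expansion to a complex neighborhood of the marking variable to obtain a quasi-power form for $E[e^{tK_n}]$ and conclude via Hwang's theorem or L\'evy continuity. Your route is shorter once uniform root separation is established on a complex neighborhood (a mildly stronger analytic input than the paper needs for real differentiation), and it yields rate-of-convergence information for free; the paper's moment computation is more elementary and meshes directly with its proof of the Generalized Lekkerkerker Theorem, which is precisely the first differentiated identity. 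You are also right to single out $\Psi''(0)>0$ as a point requiring proof: the paper must likewise establish that the variance grows linearly in $n$, and this nondegeneracy is part of the technical root analysis carried out in the appendices of \cite{MW}.
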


Their method generalizes to a multitude of other problems, and allows us to prove Gaussian behavior in many other situations. We state one particularly interesting situation.

\begin{definition}\label{far}
We call a sum of the $\pm F_n$'s a {\textit{\textbf{far-difference representation}}}\label{far-difference representation} if every two terms of the same sign differ in index by at least 4, and every two terms of opposite sign differ in index by at least 3.
\end{definition}

Albert \cite{Al} proved the analogue of Zeckendorf's Theorem for the far-difference representation. It is convenient to set\label{S_n} \be \twocase{S_n \ = \ }{\sum_{0<n-4i\le n} F_{n-4i} \ = \ F_n + F_{n-4} + F_{n-8} + \cdots}{if $n > 0$}{0}{otherwise.} \ee

\begin{theorem}[Generalized Zeckendorf's Theorem for Far-Difference Representations \cite{Al}]\label{thmhan}
Every integer has a unique far-difference representation. For each $N\in (S_{n-1}=F_n-S_{n-3}-1,S_n]$, the first term in its far-difference representation is $F_n$, and the unique far-difference representation of 0 is the empty representation.
\end{theorem}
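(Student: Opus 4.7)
My plan is to prove existence and uniqueness simultaneously by a greedy algorithm coupled with strong induction on $|N|$. The single engine of the proof is the identity
\[ S_{n-1} + S_{n-3} \ = \ F_n - 1, \]
which is precisely the equality $S_{n-1} = F_n - S_{n-3} - 1$ stated in the theorem. I would first verify this identity, either by induction on $n$ using the defining recurrence $S_n = F_n + S_{n-4}$ together with $F_n = F_{n-1} + F_{n-2}$, or directly by iterating $F_n = F_{n-1} + F_{n-3} + F_{n-4} = F_{n-1} + F_{n-3} + F_{n-5} + F_{n-6} = \cdots$ until one reaches $F_1$; grouping the resulting indices by residue mod~$4$ produces $S_{n-1} + S_{n-3}$ plus a terminal $1$.

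For existence (positive $N$; the negative case follows by negating all signs, and $N = 0$ is the empty sum), let $n$ be the unique index with $N \in (S_{n-1}, S_n]$, which exists because $\{S_n\}$ is strictly increasing. Take $+F_n$ as the leading term and set $N' = N - F_n$; the given interval and the identity rearrange to $N' \in [-S_{n-3},\, S_{n-4}]$. If $N' > 0$, induction yields a far-difference representation of $N'$ whose leading (positive) index is at most $n-4$, so prepending $+F_n$ preserves the same-sign gap of~$4$. If $N' < 0$, induction handles $-N' \in (0, S_{n-3}]$; negating every sign yields a representation of $N'$ with leading index at most $n-3$, and prepending $+F_n$ satisfies the opposite-sign gap of~$3$.

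For uniqueness together with the characterization of the leading term, suppose $N > 0$ has some legal far-difference representation with largest-index term $+F_n$. Within each sign class the spacing is at least $4$, and the largest-indexed negative term has index at most $n-3$. Summing along each sign class bounds the positive contribution by $F_n + F_{n-4} + F_{n-8} + \cdots = S_n$ and the absolute value of the negative contribution by $S_{n-3}$, so $N \in [F_n - S_{n-3},\, S_n] = (S_{n-1}, S_n]$ by our identity. This forces the leading index to be exactly the $n$ determined by $N$; subtracting $F_n$ and invoking induction on the remainder $N'$ gives uniqueness of the rest of the decomposition.

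The main obstacle is not depth but bookkeeping: one must simultaneously track the sign of each successive summand and the appropriate gap constraint (four for same sign, three for opposite) while keeping the bounds $S_n$ and $-S_{n-3}$ tight. The identity $S_{n-1} + S_{n-3} = F_n - 1$ is the only nontrivial ingredient, and once it is in hand the intervals $(S_{n-1}, S_n]$ partition the positive integers in exactly the way needed for the greedy step to produce a unique legal representation at each stage.
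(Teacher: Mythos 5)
Your argument is essentially correct, but there is nothing in the paper to compare it against: the paper states this theorem as a quoted result of Alpert \cite{Al} and gives no proof of it (the generating-function machinery developed in \S 3 is used only for the statistical refinement in Theorem \ref{thm:lekgaussfardiff}). Your self-contained greedy argument is sound and is the standard route: the identity $S_{n-1}+S_{n-3}=F_n-1$ checks out (e.g.\ by induction via $S_n=F_n+S_{n-4}$ and $F_{n-3}+F_{n-4}=F_{n-2}$), the interval computation $N-F_n\in[-S_{n-3},S_{n-4}]$ is right, the gap bookkeeping when prepending $+F_n$ (leading index $\le n-4$ in the positive remainder case, $\le n-3$ in the negative case) works, and the uniqueness bound $N\in[F_n-S_{n-3},S_n]=(S_{n-1},S_n]$ correctly pins down the leading index. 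Two small points you should make explicit to close the induction cleanly: (i) for $N>0$ the largest-index term must be \emph{positive}, which follows from the mirror-image bound (a representation led by $-F_n$ sums to at most $-F_n+S_{n-3}=-S_{n-1}-1<0$); this same bound shows every nonempty representation is nonzero, which is exactly the uniqueness of the empty representation of $0$ that your induction bottoms out on. (ii) The induction on $|N|$ is well-founded because $|N-F_n|\le S_{n-3}<F_n-S_{n-3}\le N$, which is equivalent to $S_{n-3}<S_{n-1}+1$ via your identity; this is worth one line since it is what guarantees the recursion terminates.
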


The far-difference representations have both positive and negative summands, which opens up the fascinating question of how the number of each are related. It turns out they are correlated, though the total number of summands and the difference in the number of positive and negative summands are not. Specifically,

\begin{theorem}[Generalized Lekkerkerker's Theorem and Gaussian Behavior for Far-Difference Representations \cite{MW}]\label{thm:lekgaussfardiff}
Let $\mathcal{K}_n$\label{mathcalK_n} and $\mathcal{L}_n$ be the corresponding random variables denoting the number of positive summands and the number of negative summands in the far-difference representation for integers in $(S_{n-1},S_n]$. As $n$ goes to infinity, the expected value of $\mathcal{K}_n$, denoted by $\mathbb{E}[\mathcal{K}_n]$\label{mathbbE}, is $\frac{1}{10}n+\frac{371-113\sqrt{5}}{40}$ and $\frac{1+\sqrt{5}}{4}=\frac{\phi}{2}$ greater than $\mathbb{E}[\mathcal{L}_n]$; the variance of both is of size $\frac{15+21\sqrt{5}}{1000}n$; the joint density of $\mathcal{K}_n$ and $\mathcal{L}_n$ is a bivariate Gaussian with negative correlation $\frac{10\sqrt{5}-121}{179}=-\frac{21-2\varphi}{29+2\varphi}\approx -0.551$; and $\mathcal{K}_n+\mathcal{L}_n$ and $\mathcal{K}_n-\mathcal{L}_n$ are independent.
\end{theorem}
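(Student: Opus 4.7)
The plan is to adapt the generating function machinery behind Theorem \ref{thm:Gaussian} to the two-variable setting demanded by the far-difference representation, and then read the joint distribution, the means, the variances, and the correlation off the resulting bivariate generating function. Let $p_{n,k,\ell}$ denote the number of integers in $(S_{n-1},S_n]$ whose far-difference representation contains exactly $k$ positive and $\ell$ negative summands. By Theorem \ref{thmhan}, each such integer is uniquely represented and its leading term is $+F_n$, so the counting problem reduces to enumerating legal tails below the leading $+F_n$.

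First I would set up and solve a functional equation for the trivariate generating function
\be
F(x,y,z) \ = \ \sum_{n,k,\ell} p_{n,k,\ell}\, x^{k} y^{\ell} z^{n}.
\ee
Splitting the representation according to whether the second summand (if any) is $+F_m$ with $m\le n-4$ or $-F_m$ with $m\le n-3$ yields a coupled pair of relations between $F(x,y,z)$ and two auxiliary series tracking the sign of the top remaining summand. Solving this coupled system produces a rational function in $z$ whose numerator and denominator are polynomials in $x,y,z$; the denominator is an explicit four-term expression (reflecting the $4$ vs.\ $3$ gap constraints) whose roots govern the asymptotics in $n$.

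With $F(x,y,z)$ in hand, the means and (co)variances follow by standard differentiation tricks, specifically from the partial fraction expansions (in $z$) of $\partial_x F$, $\partial_y F$, $\partial_x\partial_y F$, $\partial_x^2 F$, and $\partial_y^2 F$, evaluated at $x=y=1$ and then extracted in the coefficient of $z^n$. Only the dominant pole (namely $z=1/\varphi$, inherited from the Binet-type growth of the $F_n$ and of $|(S_{n-1},S_n]|=S_n-S_{n-1}$) contributes to the leading order, while the subdominant poles give exponentially small corrections. Careful bookkeeping then recovers $\mathbb{E}[\mathcal{K}_n] = n/10 + (371-113\sqrt{5})/40$, $\mathbb{E}[\mathcal{K}_n] - \mathbb{E}[\mathcal{L}_n] = \varphi/2$, $\operatorname{Var}(\mathcal{K}_n) = \operatorname{Var}(\mathcal{L}_n) = (15+21\sqrt{5})n/1000 + O(1)$, and the advertised correlation $-(21-2\varphi)/(29+2\varphi)$.

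To promote the moment information to joint Gaussian convergence, I would use the bivariate moment generating function approach: writing $M_n(s,t)$ for the standardized joint MGF of $(\mathcal{K}_n,\mathcal{L}_n)$ as the ratio of the $z^n$ coefficient of $F(e^{s/\sqrt{n}},e^{t/\sqrt{n}},z)$ to that of $F(1,1,z)$, and tracking how the dominant singularity and its residue move under the perturbation $(x,y)=(e^{s/\sqrt{n}},e^{t/\sqrt{n}})$, I would show $M_n(s,t) \to \exp\bigl(\tfrac{1}{2}(\sigma_+^2 s^2 + 2\rho\sigma_+\sigma_- st + \sigma_-^2 t^2)\bigr)$, with $\rho$ matching what was extracted from the second moments. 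Finally, independence of $\mathcal{K}_n+\mathcal{L}_n$ and $\mathcal{K}_n-\mathcal{L}_n$ is essentially automatic in the Gaussian limit: two linear combinations of jointly Gaussian variables are independent iff they are uncorrelated, and $\operatorname{Cov}(\mathcal{K}_n+\mathcal{L}_n,\mathcal{K}_n-\mathcal{L}_n) = \operatorname{Var}(\mathcal{K}_n) - \operatorname{Var}(\mathcal{L}_n) = o(n)$ by the variance symmetry just computed. The main obstacle, and the real bookkeeping exercise, is setting up the coupled functional equation correctly so as to enforce the asymmetric gap constraints and then extracting a clean enough form of $F(x,y,z)$ that the partial fraction analysis and the perturbation $(x,y) \to (e^{s/\sqrt{n}},e^{t/\sqrt{n}})$ can be pushed through explicitly in closed form.
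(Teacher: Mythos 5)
Your proposal follows essentially the same route as the paper (and the reference [MW] it defers to): a recurrence for $p_{n,k,\ell}$, the trivariate generating function $\sum p_{n,k,\ell}x^ky^\ell z^n$, partial fractions in $z$ dominated by the root at $1/\varphi$, differentiating identities for the moments, and a limit theorem for the standardized joint distribution. The only cosmetic difference is that the paper collapses your coupled sign-tracking system into the single recurrence $p_{n,k,\ell}=p_{n-1,k,\ell}+p_{n-4,k-1,\ell}+p_{n-3,\ell,k-1}$, where the swap of $k$ and $\ell$ in the last term encodes the sign change.
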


\ \\

\noindent \emph{Acknowledgements:} The first named author was partially supported by NSF grant DMS0970067 and the second named author was partially supported by NSF grant DMS0850577, Williams College and the MIT Mathematics Department. It is a pleasure to thank our colleagues from the Williams College 2010 SMALL REU program for many helpful conversations, especially Ed Burger, David Clyde, Cory Colbert, Carlos Dominguez, Gene Kopp, Murat Kolo$\breve{{\rm g}}$lu, Gea Shin and Nancy Wang. The first named author also thanks Cameron and Kayla Miller for discussions related to the cookie problem, Ed Scheinerman for useful comments on an earlier draft, as well as participants from CANT 2009 and 2010, where versions of this work was presented.


\section{The Fibonacci Case}\label{sec:fibonaccicase}

As the framework of the Fibonacci case is the basis for the general case, and as the Fibonacci case can be handled elementarily, we describe it in some detail. The key idea is to change our perspective and apply the solution to a standard problem in combinatorics, the stars and bars problem. It is also known as the cookie problem in some circles (see for instance Chapter 1 of \cite{MT-B}), or Waring's problem with first powers (see \cite{Na}).

\begin{lem}\label{lem:cookieproblem} The number of ways to partition $C$ identical objects into $P$ distinct sets is $\ncr{C+P-1}{P-1}$. \end{lem}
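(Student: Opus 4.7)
The plan is to use the standard ``stars and bars'' bijection. I would encode a partition of $C$ identical objects into $P$ distinct ordered sets, of sizes $(x_1,\ldots,x_P)$ with $x_1+\cdots+x_P = C$ and each $x_i \ge 0$, as a string of symbols: write $x_1$ stars, then one bar, then $x_2$ stars, then one bar, and so on, ending with $x_P$ stars. Every such string has exactly $C$ stars and $P-1$ bars, hence total length $C+P-1$.

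Next, I would verify that this encoding is a bijection between the tuples $(x_1,\ldots,x_P)$ under consideration and the set of length-$(C+P-1)$ strings with exactly $P-1$ bars (and the remaining $C$ positions stars). Injectivity is immediate: distinct tuples produce distinct strings. For surjectivity, any admissible string determines a tuple by counting the stars between successive bars, treating the two ends of the string as virtual bars; empty sets $x_i = 0$ correspond naturally to consecutive bars or bars at an endpoint, which is allowed by the encoding.

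Finally, counting the strings reduces to a single binomial coefficient: to specify a length-$(C+P-1)$ string of stars and bars with exactly $P-1$ bars, it suffices to choose which $P-1$ of the $C+P-1$ positions hold bars, with the remaining positions automatically holding stars. This gives $\binom{C+P-1}{P-1}$ strings and hence that many partitions, completing the proof. The only mild point to watch is the treatment of empty parts, but since the lemma places no positivity condition on the sizes, this is absorbed cleanly by the bijection and no real obstacle arises.
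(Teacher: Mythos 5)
Your proof is correct and is essentially the same argument as the paper's: the paper also lines up $C+P-1$ objects, designates $P-1$ of them as dividers, and reads off the $P$ set sizes from the blocks between dividers, which is exactly your stars-and-bars encoding. Your write-up is slightly more careful about checking the bijection and the treatment of empty parts, but there is no substantive difference in approach.
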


Here it is very important that the objects being partitioned are indistinguishable; all that matters is how  many objects are given to a specified set, not which objects are given.

\begin{proof} Imagine instead $C+P-1$ objects. There are $\ncr{C+P-1}{P-1}$ ways to choose $P-1$ of the $C$. Each of these choices corresponds to a partition of $C$ objects into $P$ sets where order does not count and the objects are indistinguishable. Specifically, for a given choice all of the remaining $C$ items are divided among the $P$ sets, where everything up to the first of the $P-1$ chosen objects goes to the first set, the objects between the first and second chosen element goes into the second set, and so on.
\end{proof}

We may recast the above as saying the number of solutions to $x_1 + \cdots + x_P = C$, with each $x_i \ge 0$, is $\ncr{C+P-1}{P-1}$. One of the advantages of this approach is that it is very easy to add in lower restrictions. For example, to find the number of solutions to $y_1 + \cdots + y_P = C$ with $y_i \ge n_i$ for some choice of non-negative integers (with of course $n_1 + \cdots + n_P \le C$), simply let $y_i = x_i+n_i$. Now each $x_i \ge 0$ is integer valued, and the number of solutions to the equation in the $y_i$'s is the number of solutions to $x_1 + \cdots + x_P = C - (n_1 + \cdots + n_P)$, or $\ncr{C - (n_1 + \cdots + n_P) + P-1}{P-1}$.

We use the above perspective to analyze the Zeckendorf decomposition of an $x \in [F_n, F_{n+1})$. The number of such $x$ is $F_{n+1} - F_n$, which by the Fibonacci recurrence relation is just $F_{n-1}$. We count the number of $x \in [F_n, F_{n+1})$ with exactly $k+1$ summands in their Zeckendorf decomposition, denoting this quantity by $N_n(k)$; we choose to record the number of summands as $k+1$ as each $x$ must have at least one summand, namely $F_n$. By the standard proof of Zeckendorf's theorem, we know each $x$ has at most one such valid decomposition. The existence of a Zeckendorf decomposition follows by showing $\sum_{k=0}^n P_n(k) = F_{n-1}$ (since no number has two valid decompositions, it is enough to know that the number of valid decompositions equals the number of integers in the interval $[F_n, F_{n+1})$).

\begin{lem} Let $P_n(k) = N_n(k) / F_{n-1}$, which is the probability an $x \in [F_n, F_{n+1})$ has exactly $k+1$ summands in its Zeckendorf decomposition. Then 
$P_n(k) = \ncr{n-1-k}{k} / F_{n-1}$. \end{lem}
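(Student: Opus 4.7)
The plan is to turn the lemma into a purely combinatorial counting problem and then apply the stars-and-bars formula of Lemma~\ref{lem:cookieproblem}. First, I would observe that if $x \in [F_n, F_{n+1})$ has Zeckendorf decomposition $x = F_{m_1} + F_{m_2} + \cdots + F_{m_{k+1}}$ with $m_1 > m_2 > \cdots > m_{k+1}$ and gaps $m_i - m_{i+1} \geq 2$, then necessarily $m_1 = n$. Indeed, if $m_1 \leq n-1$ the maximal such sum is $F_{m_1} + F_{m_1 - 2} + F_{m_1 - 4} + \cdots$, which by the telescoping identity used in the footnote on page~\pageref{history} is strictly less than $F_{m_1 + 1} \leq F_n$, contradicting $x \geq F_n$.

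Thus $N_n(k)$ equals the number of strictly decreasing sequences $n = m_1 > m_2 > \cdots > m_{k+1} \geq 1$ of positive integers with $m_i - m_{i+1} \geq 2$ for $1 \leq i \leq k$. The next step is to re-express this in terms of gap variables. Set
\[
d_j \ := \ m_j - m_{j+1} \quad (1 \leq j \leq k), \qquad d_{k+1} \ := \ m_{k+1},
\]
so that $d_1 + d_2 + \cdots + d_{k+1} = m_1 = n$, with $d_j \geq 2$ for $1 \leq j \leq k$ and $d_{k+1} \geq 1$. The sequence $(m_i)$ is recovered uniquely from the tuple $(d_j)$, so $N_n(k)$ equals the number of such tuples.

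Now I would apply the lower-bound version of stars and bars from the discussion after Lemma~\ref{lem:cookieproblem}: substituting $e_j = d_j - 2$ for $j \leq k$ and $e_{k+1} = d_{k+1} - 1$ gives a bijection with non-negative integer solutions of
\[
e_1 + e_2 + \cdots + e_{k+1} \ = \ n - 2k - 1,
\]
and Lemma~\ref{lem:cookieproblem} counts these as
\[
\binom{(n-2k-1) + (k+1) - 1}{(k+1) - 1} \ = \ \binom{n-k-1}{k}.
\]
Dividing by $|[F_n, F_{n+1}) \cap \mathbb{Z}| = F_{n+1} - F_n = F_{n-1}$ yields the stated probability.

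The main obstacle here is really just careful bookkeeping: making sure the identification $m_1 = n$ is correctly enforced (so that the sum of the gap variables equals $n$ rather than $n$ minus something), and that the non-adjacency constraint $m_i - m_{i+1} \geq 2$ translates into the right lower bound $d_j \geq 2$ with no off-by-one slip. Once the substitution is set up correctly, the application of the cookie lemma is immediate.
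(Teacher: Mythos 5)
Your proof is correct and follows essentially the same route as the paper: encode the decomposition by its gap variables, note they telescope to $n$, shift to remove the lower bounds, and apply the stars-and-bars count. The only (harmless) difference is that you index the summands from largest to smallest and explicitly justify that the leading summand must be $F_n$, which the paper takes as given from the standard Zeckendorf argument.
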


\begin{proof} If $x$ has exactly $k+1$ summands, then $x = F_{i_1} + F_{i_2} + \cdots + F_{i_k} + F_{i_{k+1}}$, where $F_{i_{k+1}} = F_n$, $1 \le i_1 < i_2 < i_3 < \cdots < i_k < i_{k+1}$, and $d_j := i_{j} - i_{j-1} \ge 2$ for $2 \le j \le k+1$ (and $d_1 := i_1 - 0 \ge 1$). We can recast this in terms of the cookie problem above. Clearly $d_1 + d_2 + \cdots + d_{k+1} = n$ (we have a telescoping series and the last summand in the decomposition is $F_n$). Let $d_1 = x_1 + 1$ and $d_j = x_j + 2$ for $2 \le j \le k+1$. Then the number of $x$ that have exactly $k+1$ summands is the number of tuples $(d_1,\dots,d_{k+1})$ with $d_1 + d_2  + \cdots + d_{k+1} = n$, or equivalently the number of tuples $(x_1, \dots, x_{k+1})$ with $x_1+ x_2 + \cdots + x_{k+1} = n - (2k+1)$. By our combinatorial result, Lemma \ref{lem:cookieproblem}, taking $C = n - (2k+1)$ and $P=k+1$ we see the number of such tuples is just \be N_n(k) \ = \ \ncr{C+P-1}{P-1}\ =\ \ncr{n-(2k+1)+(k+1-1)}{k+1-1}\ =\ \ncr{n-1-k}{k};\ee the lemma now follows.
\end{proof}

Now that we have an explicit formula for $N_n(k)$ and $P_n(k)$, \emph{all} the claims follow for the Fibonacci case. We quickly provide a sketch; see \cite{KKMW} for details.\\

\begin{enumerate}

\item Zeckendorf expansion (Theorem \ref{thm:genZeckendorf}): As each $x \in [F_n, F_{n+1})$ has at most one Zeckendorf decomposition, the claim follows by counting the number of valid Zeckendorf expansions and seeing that this equals $F_{n-1}$. This number is just $\sum_{k=0}^{n} \ncr{n-1-k}{k}$. The summands vanish if $k \ge \lfloor (n-1)/2\rfloor$ (the binomial coefficients are extended so that $\ncr{n}{\ell} = 0$ if $\ell > n$ or $\ell < 0$). We claim \be\label{eq:sumpnkFnminus1}\sum_{k=0}^n \ncr{n-1-k}{k} \ = \ F_{n-1}.\ee We proceed by induction. The base case is clear, and the general case follows from using the standard identity that $\ncr{m}{\ell} + \ncr{m}{\ell+1} = \ncr{m+1}{\ell+1}$. Specifically, \bea \sum_{k=0}^n \ncr{n+1-1-k}{k} & \ = \ & \sum_{k=0}^n \left[ \ncr{n-1-k}{k-1} + \ncr{n-1-k}{k} \right] \nonumber\\ &=& \sum_{k=1}^n \ncr{n-2-(k-1)}{k-1} + \sum_{k=0}^n \ncr{n-1-k}{k} \nonumber\\ &=& \sum_{k=0}^{n-1} \ncr{n-2-k}{k} + \sum_{k=0}^n \ncr{n-1-k}{k} \nonumber\\ &=&
F_{n-2} + F_{n-1} \eea by the inductive assumption; noting
$F_{n-2}+F_{n-1} = F_n$ completes the proof.  \\

\item Lekkerkerker's Theorem (Theorem \ref{thm:genlekkerkerker}): The claim follows by computing the expected value, which is \be \sum_{k=0}^n (k+1) P_n(k) \ = \ 1+\frac1{F_{n-1}} \sum_{k=0}^n k \ncr{n-1-k}{k}.\ee Let \bea
\ce(n) & \ = \ & \sum_{k=0}^{\lfloor \frac{n-1}2\rfloor} k
\ncr{n-1-k}{k}.  \eea Straightforward algebra shows \be\label{eq:approxcen} \ce(n) \ = \  (n-2)
F_{n-3} - \ce(n-2).\ee To see this, note \bea
\ce(n) & \ = \ & \sum_{k=0}^{\lfloor \frac{n-1}2\rfloor} k
\ncr{n-1-k}{k} \ = \ \sum_{k=1}^{\lfloor
\frac{n-1}2\rfloor} k \frac{(n-1-k)!}{k!(n-1-2k)!}
\nonumber\\ &=&  \sum_{k=1}^{\lfloor
\frac{n-1}2\rfloor} (n-2-(k-1))
\frac{(n-3-(k-1)!}{(k-1)!(n-3-2(k-1))!} \nonumber\\ &=&
\sum_{\ell=0}^{\lfloor \frac{n-3}2\rfloor} (n-2-\ell)
\ncr{n-3-\ell}{\ell} \nonumber\\ &=& (n-2) \sum_{\ell=0}^{\lfloor
\frac{n-3}2\rfloor} \ncr{n-3-\ell}{\ell} - \sum_{\ell=0}^{\lfloor
\frac{n-3}2\rfloor} \ell \ncr{n-3-\ell}{\ell} \nonumber\\ &=& (n-2)
F_{n-3} - \ce(n-2), \eea which proves the claim. Note that we
used \eqref{eq:sumpnkFnminus1} to
replace the sum of binomial coefficients with a Fibonacci number.

We study the telescoping sum \bea \sum_{\ell = 0}^{\lfloor \frac{n-3}{2}\rfloor} (-1)^\ell \left(\ce(n-2\ell) + \ce(n-2(\ell+1))\right)  \eea (which is essentially $\ce(n)$). Using \eqref{eq:approxcen} yields
\bea \sum_{\ell=0}^{\lfloor \frac{n-3}{2}\rfloor}
(-1)^\ell (n-3-2\ell) F_{n-3-2\ell} + O(F_{n-2}). \eea While we could
evaluate the last sum exactly, trivially estimating it suffices to
obtain the main term, which will give us Lekkerkerker's Theorem. 

Binet's formula\footnote{It is worth noting that while one can establish Binet's formula by substituting and checking , it can also be derived via generating functions; it is no coincidence that in the general proof generating functions will play a central role.} states that $F_n = \frac{\varphi}{\sqrt{5}} \cdot \varphi^n - \frac{1-\varphi}{\sqrt{5}} \cdot (1-\varphi)^n $, with $\varphi = \frac{1+\sqrt{5}}{2}$ is the golden mean. We use this to convert the sum into a weighted geometric
series (where each factor is multiplied by a simple polynomial):  \bea \ce(n) & \ = \ &
\frac{\varphi}{\sqrt{5}} \sum_{\ell=0}^{\lfloor \frac{n-3}{2}\rfloor}
(n-3-2\ell) (-1)^\ell \varphi^{n-3-2\ell} + O(F_{n-2}) \nonumber\\
&=& \frac{\varphi^{n-2}}{\sqrt{5}} \left[(n-3) \sum_{\ell=0}^{\lfloor
\frac{n-3}{2}\rfloor} (-\varphi^{-2})^{\ell} -2
\sum_{\ell=0}^{\lfloor \frac{n-3}{2}\rfloor} \ell
(-\varphi^{-2})^{\ell} \right] + O(F_{n-2}).\ \  \ \ \ \ \eea We use the geometric series formula to evaluate the first term, and differentiating identities\footnote{As $\sum_{k=0}^m x^k = (1-x^{m+1})/(1-x)$, applying $x \frac{d}{dx}$ to both sides gives $\sum_{k=0}^m k x^k = x(1 - (m + 1) x^m + m x^{m + 1})/(1 - x)^2$.} on the second. After some algebra we obtain \be \ce(n) \ = \ \frac{n \varphi^{n-2}}{\sqrt{5}(1+\varphi^{-2})} +
O(F_{n-2})\ = \ \frac{n \varphi^n}{\sqrt{5}(\varphi^2+1)} +
O(F_{n-2}). \ee As $F_{n-1} =
\frac{\varphi^n}{\sqrt{5}}  + O(1)$ for large $n$, we finally obtain \be \ce(n) \ =
\ \frac{n F_{n-1}}{\varphi^2+1} + O(F_{n-2}).\ee We note a more careful analysis is possible, and such a computation leads to an exact form for the mean.\\

\item Gaussian Behavior (Theorem \ref{thm:Gaussian}): As we have the density function $P_n(k) = \ncr{n-1-k}{k} / F_{n-1}$, one way to prove the Gaussian behavior is to show that as $n\to\infty$ the function $P_n(k)$ converges to a normal distribution. This may be accomplished via Stirling's formula (see \cite{KKMW} for the computation). Another possible approach would be to generalize the proof of Lekkerkerker's theorem to compute all moments, and then appeal to the Moment Method; however, as the combinatorics become harder as the moment increases, the Stirling approach is more tractable.\\

\end{enumerate}


\section{Positive Linear Recurrence Sequences}\label{sec:plrs}

We discuss the main ideas in proving the Gaussian behavior for the generalized Zeckendorf decompositions for Positive Linear Recurrence Sequences $\{H_n\}$. In particular, we discuss the obstructions that arise in trying to use the argument from the previous section, and describe the techniques that handle them. The computations become quite long and technical; we refer the reader to \cite{MW} for these details, and content ourselves with describing the method below.

The first step is to prove that all integers have a unique, legal representation involving the Positive Linear Recurrence Sequence. The proof is essentially just careful book-keeping. 

The next step is to determine the size of a general term $H_n$ in our sequence. The most important use of this is to count the number of integers we have in a given window, which is used to normalize our counts to probabilities. In the Fibonacci case, the number of integers in the interval $[F_n, F_{n+1})$ is just $F_{n+1} - F_n = F_{n-1}$; we then used Binet's formula to approximate well the size of $F_{n-1}$. This all generalizes immediately as we have a linear recurrence relation.

We first sketch another proof of the results from \S\ref{sec:fibonaccicase}, and then comment on how to generalize these arguments. While we do not discuss the proof of Theorem \ref{thm:lekgaussfardiff} in detail, it too can be handled by this method (see \cite{MW}).

\subsection{New Approach: Case of Fibonacci Numbers}\label{sec:newfibonacci}

We change notation slightly for the rest of this chapter in order to match the notation of \cite{MW}. Let
$p_{n,k} := \#\{N\in [F_n, F_{n+1})$: the Zeckendorf decomposition of $N$ has exactly $k$ summands$\}$. This double sequence satisfies a nice recurrence relation. For any $N\in [F_{n+1}, F_{n+2})$, we have $N=F_{n+1}+F_{t}+\cdots$ where $t\le n-1$ (we are not allowed to have adjacent Fibonacci numbers in our decomposition). Imagine $N$ has exactly $k+1$ summands in its decomposition. It must have $F_{n+1}$ and it cannot have $F_{n}$, and needs exactly $k$ more non-adjacent summands from $F_1$ to $F_{n-1}$. There are $p_{n-1,k}$ ways to have $k$ non-adjacent summands with $F_{n-1}$ included, $p_{n-2,k}$ ways to have $k$ non-adjacent summands without $F_{n-1}$ but with $F_{n-2}$ included, and so on. We thus obtain the following formula for $p_{n+1, k+1}$:
\begin{eqnarray}
p_{n+1,k+1}& \ = \ &p_{n-1,k}+p_{n-2,k}+\cdots. 
\end{eqnarray}
Similarly (replacing $n+1$ with $n$) we find
\begin{eqnarray} 
p_{n,k+1}& \ =  \ &p_{n-2,k}+p_{n-3,k}+\cdots. 
\end{eqnarray}
Subtracting, we find
\begin{eqnarray} p_{n+1,k+1}& \ =  \ &p_{n,k+1}+p_{n-1,k}.
\end{eqnarray}

Our goal is to extract information about the $p_{n,k}$. A powerful approach is to use generating functions. The generating function in this case is\be \sum_{n,k>0}p_{n,k}x^ky^n \ = \ \frac{y}{1-y-xy^2}. \ee Using partial fractions, we find \be \frac{y}{1-y-xy^2} \ = \ -\frac{y}{y_1(x)-y_2(x)}\left(\frac{1}{y-y_1(x)}-\frac{1}{y-y_2(x)}\right),\ee
where $y_1(x)$ and $y_2(x)$ are the roots of $1-y-xy^2=0$ and the coefficient of $y^n$ is $g(x)=\sum_{k>0}p_{n,k}x^k$. Note the roots are readily computed via the quadratic formula; this is not true for the general case, and is the source of much of the technicalities.

As in the introduction, let $K_n$ be the corresponding random variable associated with $k$. Using the Method of Moments, it suffices to prove the moments of $K_n$ converge to the moments of a Gaussian to prove Gaussian behavior. We do so through differentiating identities. With $g(x)$ as above, differentiating once and setting $x=1$ yields \be g(1) \ =  \ \sum_{k>0}p_{n,k} \ = \ F_{n+1}-F_n, \ee which is just the number of elements in our interval $[F_n, F_{n+1})$. Differentiating again gives\be g'(x) \ = \ \sum_{k>0}kp_{n,k}x^{k-1}.\ee If we take $x=1$ we essentially obtain the mean of $K_n$; we need to divide by $F_{n-1}$ as the $p_{n,k}$'s are counts and not probabilities. As $g(1) = F_{n-1}$, we find  \be g'(1) \ =  \ g(1)E[K_n].\ee We continue, and find \be \left(xg'(x)\right)' \ = \ \sum_{k>0}k^2p_{n,k}x^{k-1},\ee which leads to \be \left(xg'(x)\right)'\Big|_{x=1}\ = \ g(1)E[K_n^2], \ee and then \be \left(x\left(xg'(x)\right)'\right)'\Big|_{x=1} \ = \ g(1)E[K_n^3],\ee  and so on.

Similar results hold for the centralized random variable $K'_n=K_n-E[K_n]$. Miller and Wang \cite{MW} prove that $E[(K'_n)^{2m}]/({\rm SD}(K'_n))^{2m}\rightarrow (2m-1)!!$ (with ${\rm SD}(K_n')$ the standard deviation of $K_n'$) and $E[(K'_n)^{2m-1}]/({\rm SD}(K'_n))^{2m-1}$ $\rightarrow$ $0$, which yields the Gaussian behavior of $K_n$.

\subsection{New Approach: General Case}

We generalize the arguments from \S\ref{sec:newfibonacci} to the case of Zeckendorf expansions arising from Positive Linear Recurrence Sequences $\{H_n\}$. We now set $p_{n,k}= \#\{N\in [H_n, H_{n+1})$: the generalized Zeckendorf decomposition of $N$ has exactly $k$ summands$\}$. We can find a recurrence relation as before. For the Fibonacci numbers, we found \be p_{n+1,k+1} \ = \ p_{n,k+1}+p_{n,k}.\ee In the general case, we have\be p_{n+1,k}\ = \ \sum_{m=0}^{L-1}\sum_{j=s_m}^{s_{m+1}-1}p_{n-m,k-j},\ee where $s_0=0, s_m=c_1+c_2+\cdots+c_m$ (with these $c$'s the same $c$'s as in the definition of our recurrence sequence).

The generating function in the Fibonacci case was relatively straightforward, being $\frac{y}{1-y-xy^2}$. Now our generating function equals \begin{equation}
\frac{\sum_{n\le L}p_{n,k}x^k y^n-\sum_{m=0}^{L-1}\sum_{j=s_m}^{s_{m+1}-1} x^j y^{m+1} \sum_{n<L-m}p_{n,k}x^k y^n}{1-\sum_{m=0}^{L-1}\sum_{j=s_m}^{s_{m+1}-1}x^j y^{m+1}}.
\end{equation}

We again perform a partial fraction expansion in order to glean information about the coefficients. Instead of \be -\frac{y}{y_1(x)-y_2(x)}\left(\frac{1}{y-y_1(x)}-\frac{1}{y-y_2(x)}\right), \ee now we have \be-\frac{1}{\sum_{j=s_{L-1}}^{s_L-1}x^j} \sum_{i=1}^{L}\frac{B(x,y)}{(y-y_i(x))\prod_{j\neq i}\left(y_j(x)-y_i(x)\right)}, \ee where\be B(x,y) \ = \ \sum_{n\le L}p_{n,k}x^k y^n-\sum_{m=0}^{L-1}\sum_{j=s_m}^{s_{m+1}-1} x^j y^{m+1} \sum_{n<L-m}p_{n,k}x^k y^n \ee and the $y_i(x)$'s are the roots of $A(y)=1-\sum_{m=0}^{L-1}\sum_{j=s_m}^{s_{m+1}-1}x^j y^{m+1}=0$. One of the major difficulties of the proof is the analysis of the roots $y_i(x)$. Unlike the Fibonacci case, where it was easy to write down simple expressions for these via the quadratic formula, in general the arguments become quite involved. If however the coefficients in the recurrence relation are non-increasing then the proofs are easy; see Appendix C of \cite{MW}. The general cases (see Appendix A of \cite{MW}) is more complicated, involving continuity and the range of the $|y_i(x)|$'s. The main idea is to first show that there exists $x > 0$ such that $A(y)$
has no multiple roots and then prove that there are only finitely many $x > 0$ such that $A(y)$ has multiple roots.

The coefficient of $y^n$ is $g(x)=\sum_{k>0}p_{n,k}x^k$. We use the method of differentiating identities as before to get the moments, and find that $K_n$ converges to a Gaussian as $n\to\infty$.

\ \\

The proof of Theorem \ref{thm:lekgaussfardiff} proceeds similarly. We state the two key elements, the recurrence relation and the generating function, without proof, and refer the reader to \cite{MW} for the details. Let $p_{n,k,\ell}$ be the number of far-difference representations of integers in $(S_{n-1},S_n]$ with $k$ positive summands and $\ell$ negative summands. The recurrence relation is
 \begin{equation}
p_{n,k,\ell} \ = \ p_{n-1,k,\ell}+p_{n-4,k-1,\ell}+p_{n-3,\ell,k-1},\ n\ge 5,
\end{equation} and the generating function is \be \hat{\mathscr{G}}(x,y,z) \ = \ \sum_{n>0,k>0,\ell\ge 0}p_{n,k,\ell}x^ky^\ell z^n. \ee


\section{Conclusion and Future Research}\label{sec:conclusion}

The combinatorial approach has extended previous work, allowing us to prove Gaussian behavior for the number of summands for a large class of expansions in terms of solutions to linear recurrence relations. This is just the first of many questions one can ask. Others, which we hope to return to at a later date, include:
\begin{enumerate}
\item Are there similar results for linearly recursive sequences with arbitrary integer coefficients (i.e., negative coefficients are allowed in the defining relation)?\\

\item Lekkerkerker's theorem, and the Gaussian extension, are for the behavior in intervals $[F_n, F_{n+1})$.
Do the limits exist if we consider other intervals, say
$[F_n+g_1(F_n), F_n + g_2(F_n))$ for some functions $g_1$ and $g_2$? If yes, what must be
true about the growth rates of $g_1$ and $g_2$?\\

\item For the generalized recurrence relations, what happens if instead of looking at $\sum_{i=1}^n a_i$ we study $\sum_{i=1}^n \min(1,a_i)$? In other words, we only care about how many distinct $H_i$'s occur in the decomposition.\\

\item What can we say about the distribution of the largest gap between summands in the Zeckendorf decomposition? Appropriately normalized, how does the distribution of gaps between the summands behave? What is the distribution of the largest gap? How often is there a gap of 2? 
\end{enumerate}


$ $

\end{document}